\newcommand{\ex}{\operatorname{ex}^{*}}
\newcommand{\exni}{\operatorname{ex}}
\newcommand{\floor}[1]{\left\lfloor#1\right\rfloor}
\newtheorem{theorem}{Theorem}[section]
\newtheorem{corollary}[theorem]{Corollary}
\newtheorem{question}[theorem]{Question}
\title{Large subposets with small dimension}
\author{Benjamin Reiniger* \and Elyse Yeager}
\thanks{*corresponding author; email: reinige1@illinois.edu\\
reinige1@illinois.edu, yeager2@illinois.edu\\Mathematics Dept., University of Illinois, Urbana-Champaign}   
\begin{document}
\begin{abstract}
Dorais asked for the maximum guaranteed size of a dimension $d$ subposet of an $n$-element poset.  A lower bound of order $\sqrt{n}$ was found by Goodwillie.  We provide a sublinear upper bound for each $d$.  For $d=2$, our bound is $n^{0.8295}$.
\end{abstract}

\maketitle

\section{Introduction}
Given a family of posets $\mathcal{F}$, let $\ex(P, \mathcal{F})$ denote the size of the largest induced subposet of $P$ that does not contain any member of $\mathcal{F}$ as an induced subposet.  
Similarly, $\exni(P,\mathcal{F})$ is the size of the largest induced subposet of $P$ that does not contain a member of $\mathcal{F}$ as a not-necessarily-induced subposet.  
These can be seen as poset analogues of the \emph{relative Tur\'an} numbers of families of graphs (in some host graph).
We write $\ex(P,\{Q\})$ as simply $\ex(P,Q)$.  
Let $\ex(n,\mathcal{F})$ denote the minimum of $\ex(P,\mathcal{F})$ over all $n$-element posets $P$.  
In other words, $\ex(n,\mathcal{F})$ is the maximum $k$ such that every $n$-element poset $P$ has an $\mathcal{F}$-free subposet of size at least $k$.  
Let $B_n$ be the boolean lattice of dimension $n$ and $A_n$ an antichain on $n$ points.

Then $\ex(P,B_1)$ is just the width of $P$ and $\ex(P,A_2)$ is the height of $P$.  
The function $\exni(B_n, B_2)$ is heavily studied as the maximum size of a ``diamond-free'' family of sets.  
In the literature, $\exni(B_n, P)$ is denoted $\operatorname{La}(n,P)$, and $\ex(B_n,P)$ is denoted $\operatorname{La}^{\sharp}(n,P)$ or $\operatorname{La}^*(n,P)$.

In this note we are concerned with finding large subposets of small dimension.  Hence we let $\mathcal{D}_d$ denote the family of posets of dimension at least $d$, and ask

\begin{question}
What is $\ex(n, \mathcal{D}_{d+1})$?
\end{question}

In other words, what is the largest size of a dimension $d$ subposet we are guaranteed to find in an $n$-element poset?  
(Note that when $d=1$, $A_n$ shows that $\ex(n, \mathcal{D}_{d+1})=1$.  We henceforth assume $d>1$.)  
This question was originally posed by F. Dorais~\cite{D-mo}, whose aim was to eventually understand the question for infinite posets~\cite{Dorais}.  
Goodwillie~\cite{Goodwillie} proved that $\ex(n, \mathcal{D}_{d+1})\geq\sqrt{dn}$ by considering the width of $P$: if $w(P)\geq\sqrt{dn}$, then a maximum antichain is a large subposet of dimension 2; if $w(P)\leq\sqrt{dn}$, then by Dilworth's theorem the union of some $d$ chains has $\geq \sqrt{dn}$ elements, and this has dimension at most $d$.

We provide a sublinear upper bound by considering the lexicographic power of standard examples.  Theorem \ref{thm:lexpower} finds the extremal number for lexicographic powers, and Corollary \ref{cor:2d} applies this to $\ex(n,\mathcal{D}_3)$.  For other $d$, Table \ref{table:dme} provides upper bounds on $\ex(n,\mathcal{D}_{d+1})$.

\section{Main theorem}
Given a poset $P$ and positive integer $k$, let $P^{k}$ denote the lexicographic order on $k$-tuples of elements of $P$.
\begin{theorem}
\label{thm:lexpower}
Let $P$ be a poset, $\mathcal{F}$ a family of posets, $k$ a positive integer, and let $n=|P|^k=|P^k|$.  Then $\ex(|P|^k, \mathcal{F}) \leq \ex(P^k, \mathcal{F})\leq n^{\log_{|P|}(\ex(P,\mathcal{F}))}$.
\end{theorem}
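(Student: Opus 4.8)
The plan is to handle the two inequalities separately. The left one, $\ex(|P|^k,\mathcal{F}) \le \ex(P^k,\mathcal{F})$, is immediate: by definition $\ex(n,\mathcal{F})$ is the minimum of $\ex(Q,\mathcal{F})$ over all $n$-element posets $Q$, and $P^k$ is one particular poset on $n=|P|^k$ elements. For the right inequality, set $p=|P|$ and $e=\ex(P,\mathcal{F})$; since $n^{\log_p e}=(p^k)^{\log_p e}=e^k$ (using $p\ge 2$, as is needed for $\log_p$ to make sense), it suffices to prove $\ex(P^k,\mathcal{F}) \le \ex(P,\mathcal{F})^k$. I would do this by induction on $k$, the base case $k=1$ being trivial, with the inductive step powered by the single-step inequality
\[
\ex(P\otimes R,\mathcal{F}) \le \ex(P,\mathcal{F})\cdot\ex(R,\mathcal{F})
\]
for an arbitrary poset $R$, where $\otimes$ denotes the lexicographic product; applying this with $R=P^{k-1}$ (so that $P^k = P\otimes P^{k-1}$, with the first coordinate compared first) and using the inductive hypothesis gives $\ex(P^k,\mathcal{F})\le \ex(P,\mathcal{F})^k$.

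To prove the single-step inequality, let $S$ be an $\mathcal{F}$-free induced subposet of $P\otimes R$ and for each $x\in P$ let $S_x=\{r\in R:(x,r)\in S\}$ be the fiber over $x$. Two observations complete the count. First, $r\mapsto(x,r)$ is an isomorphism from the induced subposet of $R$ on $S_x$ onto the induced subposet of $S$ on $\{x\}\times S_x$ (within a single fiber the order agrees with that of $R$), so every $S_x$ is $\mathcal{F}$-free and $|S_x|\le\ex(R,\mathcal{F})$. Second, choosing a representative $r_x\in S_x$ for each nonempty fiber, the set $\{(x,r_x)\}$ is an induced subposet of $S$ isomorphic, via $(x,r_x)\mapsto x$, to the induced subposet of $P$ on $Q:=\{x\in P:S_x\ne\emptyset\}$, because for $x\ne y$ one has $(x,r_x)<(y,r_y)$ in $P\otimes R$ exactly when $x<y$ in $P$; hence $Q$ is $\mathcal{F}$-free and $|Q|\le\ex(P,\mathcal{F})$. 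Combining, $|S|=\sum_{x\in Q}|S_x|\le|Q|\cdot\ex(R,\mathcal{F})\le\ex(P,\mathcal{F})\cdot\ex(R,\mathcal{F})$.

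This is mostly bookkeeping. The point needing care is verifying that the two maps above are genuinely isomorphisms of induced subposets --- in particular that a transversal of the nonempty fibers reflects exactly the comparabilities of $P$ on $Q$ --- together with the routine fact that containing no member of $\mathcal{F}$ as an induced subposet is a property inherited by induced subposets. I do not expect a real obstacle: the substance of the theorem is simply that taking lexicographic powers converts the multiplicative bound $e^k$ into the claimed $n^{\log_{|P|}e}$.
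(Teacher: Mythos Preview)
Your argument is correct and is essentially the paper's proof: the paper also bounds $|S|$ by $\ex(P,\mathcal{F})^k$ via the same two observations---that each fiber is an $\mathcal{F}$-free induced copy of a subset of the second factor, and that a transversal of the nonempty fibers is an $\mathcal{F}$-free induced copy of a subset of $P$---only it unrolls the recursion over initial segments directly rather than isolating your submultiplicativity lemma $\ex(P\otimes R,\mathcal{F})\le\ex(P,\mathcal{F})\cdot\ex(R,\mathcal{F})$ and inducting on $k$. The packaging differs, but the content is the same.
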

\begin{proof}
Let $S$ be a maximum $\mathcal{F}$-free subposet of $P^k$ (so $|S|=\ex(P^k,\mathcal{F})$).  For $i\leq k+1$ and each $i$-tuple $\alpha$, let 
\begin{align*}
 S_{\alpha} &= \{s\in S : \alpha\text{ is an initial segment of $s$}\}, \\
 Q(\alpha)&= \{p\in P: (\alpha, p) \text{ is an initial segment of some $s\in S$}\}.
\end{align*}
Then each $Q(\alpha)$ is an induced subposet of $S$, under any of the maps that assign to $p\in P$ an element $s\in S$ with initial segment $(\alpha,p)$.  Since $S$ is $\mathcal{F}$-free, so is $Q(\alpha)$, hence $|Q(\alpha)|\leq\ex(P,\mathcal{F})$.

We have that
\[ \left| S_{\alpha} \right| = \sum_{p\in Q(\alpha)} \left| S_{(\alpha,p)} \right|
 \leq |Q(\alpha)| \cdot \max_{p\in Q(\alpha)} \left| S_{(\alpha,p)} \right|
 \leq \ex(P,\mathcal{F}) \cdot \max_{p\in Q(\alpha)} \left| S_{(\alpha,p)} \right|. \]
When $\omega$ is a $k$-tuple, $S_{\omega}$ is either $\{\omega\}$ or $\emptyset$.  Hence we have, for $\alpha$ an $i$-tuple,
\[ \left| S_{\alpha} \right| \leq (\ex(P,\mathcal{F}) )^{k-i}, \]
and in particular, for $\alpha$ the 0-tuple,
\[ |S| \leq \left( \ex(P,\mathcal{F}) \right)^k = |P|^{\log_{|P|}(\ex(P,\mathcal{F})^k)} = n^{\log_{|P|}(\ex(P,\mathcal{F}))} . \]
\end{proof}

\begin{corollary}
\label{cor:2d}
For all sufficiently large $n$, 
$\ex(n, \mathcal{D}_3) \leq n^{0.8295}.$
\end{corollary}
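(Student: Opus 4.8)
The plan is to invoke Theorem~\ref{thm:lexpower} with $\mathcal{F}=\mathcal{D}_3$ and $P=S_m$, the standard example of dimension $m$, so that $|S_m|=2m$; since $\dim S_m=m$, the poset $S_m$ is not itself $\mathcal{D}_3$-free once $m\geq 3$, and the point of the argument is to get a good upper bound on $\ex(S_m,\mathcal{D}_3)$ — the size of the largest induced subposet of $S_m$ that contains no induced subposet of dimension $\geq 3$, i.e.\ the largest induced subposet of dimension at most $2$. Feeding that bound into Theorem~\ref{thm:lexpower} and optimizing over $m$ yields the corollary.

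The key estimate is $\ex(S_m,\mathcal{D}_3)\leq m+2$. Write $S_m$ with minimal elements $a_1,\dots,a_m$ and maximal elements $b_1,\dots,b_m$, where $a_i<b_j$ iff $i\neq j$; an induced subposet corresponds to index sets $I,J\subseteq\{1,\dots,m\}$, keeping $\{a_i:i\in I\}\cup\{b_j:j\in J\}$. If $|I|+|J|\geq m+3$ then $|I\cap J|=|I|+|J|-|I\cup J|\geq 3$, and the subposet induced on $\{a_i,b_i:i\in I\cap J\}$ is a copy of $S_3$. Since the dimension of an induced subposet is at most that of the ambient poset, any poset containing an induced $S_3$ has dimension at least $\dim S_3=3$; hence no $\mathcal{D}_3$-free induced subposet of $S_m$ can have more than $m+2$ elements. (The matching lower bound $\ex(S_m,\mathcal{D}_3)\geq m+2$, witnessed by $I=\{1,\dots,m\}$, $J=\{1,2\}$, also holds — exhibit a transitive orientation of the incomparability graph of that subposet — but it is not needed here.)

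Now Theorem~\ref{thm:lexpower} gives, for every positive integer $k$ and $n=(2m)^k$,
\[
 \ex(n,\mathcal{D}_3)\ \leq\ \ex\!\left(S_m^{\,k},\mathcal{D}_3\right)\ \leq\ n^{\log_{2m}(m+2)} .
\]
A short calculation shows that $g(m)=\log_{2m}(m+2)$, over integers $m\geq 3$, is minimized at $m=10$, with $g(10)=\log_{20}12=0.82948\ldots<0.8295$; so for every $n$ that is a power of $20$ we already have $\ex(n,\mathcal{D}_3)\leq n^{0.8295}$. To reach all sufficiently large $n$, use that $\ex(\cdot,\mathcal{D}_3)$ is non-decreasing (a $\mathcal{D}_3$-free subposet of an induced subposet is one of the whole poset): for $20^k\leq n<20^{k+1}$, the $m=10$ case of the displayed bound gives
\[
 \ex(n,\mathcal{D}_3)\ \leq\ \ex\!\left(20^{k+1},\mathcal{D}_3\right)\ \leq\ 12^{\,k+1}\ =\ 12\cdot 12^{\,k}\ \leq\ 12\,n^{\log_{20}12},
\]
and because $\log_{20}12<0.8295$ strictly, the factor $12$ is absorbed once $n$ is large.

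I do not expect a genuine obstacle: the only real content is the pigeonhole bound $\ex(S_m,\mathcal{D}_3)\leq m+2$, together with the standard facts $\dim S_3=3$ and monotonicity of dimension under induced subposets, plus the elementary optimization that selects $m=10$. The one place to be careful is the arithmetic — verifying that $\log_{20}12$ really lies just below $0.8295$, that $m=10$ beats its neighbours, and that the final ``sufficiently large'' step correctly trades the factor $12$ against the slack between $\log_{20}12$ and $0.8295$. If one also wants $\ex(S_m,\mathcal{D}_3)=m+2$ exactly, the extra ingredient is the routine construction of a $2$-realizer for the subposet with $|I|=m$, $|J|=2$.
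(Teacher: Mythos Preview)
Your argument is correct and follows the same core strategy as the paper: apply Theorem~\ref{thm:lexpower} with $P=S_m$, use $\ex(S_m,\mathcal{D}_3)=m+2$, and optimize the exponent $\log_{2m}(m+2)$ at $m=10$. You even supply the pigeonhole justification for $\ex(S_m,\mathcal{D}_3)\leq m+2$, which the paper leaves as ``easy to see.''

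The one genuine difference is in the passage from powers of $20$ to arbitrary $n$. The paper writes $n$ in base $20$, builds the disjoint union $Q=\bigcup_i \alpha_i\cdot S_{10}^i$ on exactly $n$ points, and bounds $\ex(Q,\mathcal{D}_3)$ via Jensen's inequality. You instead invoke monotonicity of $n\mapsto\ex(n,\mathcal{D}_3)$ and round $n$ up to $20^{k+1}$, absorbing the resulting constant factor $12$ into the slack between $\log_{20}12$ and $0.8295$. Your route is shorter and more elementary; the paper's construction is a bit more explicit (it exhibits a hard poset on exactly $n$ points) and gives a marginally sharper implicit constant, but both approaches need ``sufficiently large $n$'' anyway, so neither gains anything substantive over the other.
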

\begin{proof}
Take $P=S_m$, the standard example on $2m$ points, in the preceding theorem.  It is easy to see that $\ex(S_m, \mathcal{D}_3)=m+2$.  Hence the exponent on the family of posets obtained is $\log_{2m}(m+2)$, which is minimized at $m=10$ with value approximately 0.82948.  This completes the proof when $n$ is a power of 20.

Otherwise, write $n=\sum_{i=0}^k \alpha_i (20)^i$, each $\alpha_i\in\{0,\dotsc,19\}$.  Then let $Q$ be the poset that is the disjoint union of $\alpha_i$ copies of $S_{10}^i$ for each $i$.  A maximum dimension 2 subposet of $Q$ is precisely the union of maximum dimension 2 subposets of each $S_{10}^i$.  So
\begin{align*}
 \ex(n, \mathcal{D}_3) &\leq \ex(Q,\mathcal{D}_3) \\
 &= \sum_{i=0}^k \alpha_i \ex(S_{10}^i, \mathcal{D}_3) \\
 &\leq \sum_{i=0}^k \alpha_i (20)^{0.82949i} \\
 &\leq \left(\sum_{i=0}^k \alpha_i \right) \left( \frac{\sum_{i=0}^k \alpha_i (20)^i }{\sum_{i=0}^k \alpha_i} \right)^{0.82949} & \text{(Jensen's inequality)} \\
 &= \left(\sum_{i=0}^k \alpha_i \right)^{1-0.82949} n^{0.82949} \\
 &\leq (19 (\floor{\log_{20} n} + 1) )^{0.17051} n^{0.82949} \\
 &< n^{0.8295}
\end{align*}
for sufficiently large $n$.
\end{proof}

Essentially the same proof works for any $d$.  We have for any $m$ and any $\epsilon>0$ that for sufficiently large $n$, $\ex(n,\mathcal{D}_{d+1})\leq n^{\log_{2m}(m+d)+\epsilon}$.  Table \ref{table:dme} shows some values of $d$ with the minimizing $m$ and the minimum value of the exponent (rounded to the 5th decimal place).
\begin{center}
\begin{table}[h!]
\begin{tabular}{rrl}
$d$ & $m$ & $\log_{2m}(m+d)$ \\
2 & 10 & 0.82948 \\
3 & 17 & 0.84953 \\
4 & 25 & 0.86076 \\
10&78 & 0.88663 \\
100&1169&0.92122
\end{tabular}
\caption{Values of $m$ that minimize $\log_{2m}(m+d)$ for given $d$.}
\label{table:dme}
\end{table}
\end{center}

\section{Remarks}
There is still a rather large gap between the known lower and upper bounds for $\ex(n,\mathcal{D}_{d+1})$.  Any improvement to either the lower or upper bound would be interesting.

Given the interest in $\exni(B_n, B_2)$, one may be interested in $\ex(B_n, \mathcal{D}_{d+1})$ instead of $\ex(n, \mathcal{D}_{d+1})$.  
\begin{question}
What is $\ex(B_n, \mathcal{D}_{d+1})$?
\end{question}
Lu and Milans (personal communication) 
have shown that $\ex(B_n, S_d)\leq (4d+C\sqrt{d}+\epsilon)\binom{n}{\floor{n/2}}$.  
Hence also $\ex(B_n, \mathcal{D}_d)=\Theta(\binom{n}{\floor{n/2}})$.  
For small cases, we have computed that $\ex(B_n, \mathcal{D}_3)=1,4,7,12,20$ for $n=1,2,3,4,5$.  

In 1974, Erd\H{o}s~\cite{Erdos} posed and partially answered the following question: given an $r$-uniform hypergraph $G_r(n)$ on $n$ vertices such that every $m$-vertex subgraph has chromatic number at most $k$, how large can the chromatic number of $G_r(n)$ be? 
Using probability methods Erd\H{o}s found a lower bound for ordinary graphs when $k=3$; that is, when every $m$-vertex subgraph has chromatic number at most 3.
Thinking of poset dimension as analogous to graph chromatic number, we ask:
\begin{question}
Given a poset $P$ with $n$ elements such that every $m$-element subposet has dimension at most $d$, how large can the dimension of $P$ be?
\end{question}

\section{Acknowledgements}
The authors would like to thank Stephen Hartke and Michael Ferrara for their mentorship and guidance. The authors acknowledge support from National Science Foundation grant DMS 08-38434 ``EMSW21-MCTP: Research Experience for Graduate Students''.

\end{document}